\newcommand{\monthyear}[1]{%
  \def\@monthyear{\uppercase{#1}}}
\newcommand{\volnumber}[1]{%
  \def\@volnumber{\uppercase{#1}}}
\def\ps@plain{\ps@empty
  \def\@oddfoot{\@monthyear \hfil \thepage}%
  \def\@evenfoot{\thepage \hfil \@volnumber}}
\def\ps@firstpage{\ps@plain}
\def\ps@headings{\ps@empty
  \def\@evenhead{%
    \setTrue{runhead}%
    \def\thanks{\protect\thanks@warning}%
    \uppercase{The Fibonacci Quarterly}\hfil}%
  \def\@oddhead{%
    \setTrue{runhead}%
    \def\thanks{\protect\thanks@warning}%
    \hfill\uppercase{Hypergeometric Template}}%
  \let\@mkboth\markboth
  \def\@evenfoot{%
    \thepage \hfil \@volnumber}%
  \def\@oddfoot{%
    \@monthyear \hfil \thepage}%
  }%
\newcommand{\Rh}{{\mathcal R}}
\theoremstyle{plain}
\numberwithin{equation}{section}
\newtheorem{thm}{Theorem}[section]
\newtheorem{corollary}[thm]{Corollary}
\newtheorem{theorem}[thm]{Theorem}
\newtheorem{lemma}[thm]{Lemma}
\newtheorem{example}[thm]{Example}
\begin{document}
\monthyear{Month Year}
\volnumber{Volume, Number}
\setcounter{page}{1}

\title{The Pascal Rhombus and the Generalized Grand Motzkin Paths}

\author{Jos\'e L. Ram\'{\i}rez }
\address{\noindent Departamento  de Matem\'aticas, Universidad Sergio Arboleda, Bogot\'a,  Colombia}
\email{josel.ramirez@ima.usergioarboleda.edu.co}

\begin{abstract}
In the present article, we find a closed expression  for the entries of the Pascal rhombus. Moreover, we show a relation between the entries of the Pascal rhombus and a family of generalized grand Motzkin paths.
\end{abstract}

\maketitle

\section{Introduction}

The Pascal rhombus was introduced by Klostermeyer et al. \cite{Klos} as a variation of the well-known Pascal triangle. It is an infinite array $\Rh=[r_{i,j}]_{i=0, j=-\infty}^{\infty, \infty}$ defined by
\begin{align}\label{recrho}
r_{i,j}=r_{i-1,j-1}+r_{i-1,j}+r_{i-1,j+1}+r_{i-2,j}, \quad i\geq 2, \ \  j\in\mathbb{Z},
\end{align}
with the initial conditions
$$r_{0,0}=r_{1,-1}=r_{1,0}=r_{1,1}=1, \quad r_{0,j}=0 \ (j\neq 0), \quad r_{1,j}=0, \ (j\neq -1, 0, 1). $$

The first few rows of $\Rh$ are

 \begin{table}[h]
  \centering
  \begin{tabular}{ccccccccccccccc}
 & & &  &  & 1& &  &  & &\\
 & & &  & 1 & 1 & 1 & & & &\\
 & & & 1 & 2 & 4 & 2 & 1 & & &\\
& &1 & 3 &  8 &  9&  8 &  3 & 1 & &\\
& 1& 4& 13 & 22 & 29 & 22& 13 & 4& 1 &\\
1& 5 &19 &  42 & 72 & 82  & 72 &  42 & 19 & 5 &1\\
$\vdots$ & &$\vdots$ & &  $\vdots$ & &$\vdots$ & &$\vdots$ & & $\vdots$
   \end{tabular}
  \caption{Pascal Rhombus.}
  \label{rombo2}
\end{table}
Klostermeyer et al. \cite{Klos} studied several identities of the Pascal rhombus. Goldwasser et al. \cite{Gold} proved that the limiting ratio of the number of ones to the number of zeros in $\Rh$, taken modulo 2, approaches zero. This result was generalized by Mosche \cite{Moshe}. Recently,  Stockmeyer \cite{Sto} proved
 four conjectures about the Pascal rhombus modulo 2  given in \cite{Klos}.\\

The Pascal rhombus corresponds with the entry A059317 in the On-Line Encyclopedia of Integer Sequences (OEIS) \cite{OEIS}, where it is possible to read: \emph{There does not seem to be a simple expression for $r_{i,j}$}. \\

In the present article, we find an explicit expression for $r_{i,j}$. In particular, we prove that
$$r_{i,j}=\sum_{m=0}^{i}\sum_{l=0}^{i-j-2m}\binom{2m+j}{m}\binom{l+j+2m}{l}\binom{l}{i-j-2m-l}.$$
For this we show that $r_{i,j}$ is equal to the number of 2-generalized grand Motzkin paths.

\section{The Main Result}

A \emph{Motzkin path} of length $n$ is a lattice path of  $\mathbb{Z \times Z}$ running from $(0, 0)$ to $(n, 0)$ that never passes below the $x$-axis and whose permitted steps are the up diagonal step $U=(1, 1)$, the down diagonal step $D=(1,-1)$ and the horizontal step $H=(1, 0)$, called rise, fall and level step, respectively.  The number of Motzkin paths of length $n$ is the $n$-th \emph{Motzkin number} $m_{n}$, (sequence  A001006).  Many other examples of  bijections between Motzkin numbers and others combinatorial objects  can be found in \cite{FRA}. A  \emph{grand Motzkin path} of length  $n$ is a Motzkin path without the condition that never passes below the $x$-axis.  The number of grand Motzkin paths of length $n$ is the $n$-th \emph{grand Motzkin number} $g_{n}$, sequence A002426.  A $2$-\emph{generalized Motzkin path} is a Motzkin path with an additional step $H_2=(2,0)$. The number of $2$-generalized Motzkin paths of length  $n$ is denoted by $m^{(2)}_{n}$. Analogously, we have $2$-\emph{grand generalized Motzkin paths}, and the number of these paths of length $n$ is denoted by $g^{(2)}_{n}$.

\begin{lemma}\label{lem2}
The generating function of the 2-generalized Motzkin numbers is given by
\begin{align}
B(x):=\sum_{i=0}^\infty m_i^{(2)} x^i&=\frac{1-x-x^2-\sqrt{1 - 2 x - 5x^2 + 2x^3 + x^4}}{2x^2}\label{ida}\\
&=\frac{F(x)}{x}C(F(x)^2)\label{idb},
\end{align}
where $F(x)$ and $C(x)$ are the generating functions of the Fibonacci numbers and Catalan numbers, i.e.,
$$F(x)=\frac{x}{1-x-x^2}, \quad C(x)=\frac{1-\sqrt{1-4x}}{2x}.$$
\end{lemma}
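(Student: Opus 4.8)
\emph{Proof sketch.} The plan is to get both formulas from a single functional equation for $B(x)$ coming from a first-return decomposition of $2$-generalized Motzkin paths. A nonempty such path begins in exactly one of three mutually exclusive ways: with a level step $H$ (weight $x$) followed by an arbitrary $2$-generalized Motzkin path; with a level step $H_2$ (weight $x^2$) followed by an arbitrary such path; or with an up step $U$, whose matching down step $D$ (the first return to the $x$-axis) encloses an arbitrary $2$-generalized Motzkin path shifted up by one unit, with a further arbitrary such path after $D$. It cannot start with $D$. Translating into generating functions and accounting for the empty path gives
\[
B(x)=1+xB(x)+x^{2}B(x)+x^{2}B(x)^{2},
\]
that is, $x^{2}B(x)^{2}-(1-x-x^{2})B(x)+1=0$.

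For \eqref{ida} I would solve this quadratic, obtaining $B(x)=\dfrac{(1-x-x^{2})\pm\sqrt{(1-x-x^{2})^{2}-4x^{2}}}{2x^{2}}$, and simplify the discriminant to $1-2x-5x^{2}+2x^{3}+x^{4}$. The only point needing care is the branch: expanding $\sqrt{1-2x-5x^{2}+\cdots}=1-x-3x^{2}+\cdots$ shows that the $+$ sign produces a series with a pole at $0$, whereas the $-$ sign yields a genuine power series with constant term $m_{0}^{(2)}=1$; hence the minus sign is the correct one, which is \eqref{ida}.

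For \eqref{idb} I would avoid expanding everything and instead check that the right-hand side solves the same quadratic. Put $\mathcal{C}:=C(F(x)^{2})$; since $C(y)=1+yC(y)^{2}$, we have $\mathcal{C}=1+F(x)^{2}\mathcal{C}^{2}$, so $\mathcal{C}^{2}=(\mathcal{C}-1)/F(x)^{2}$. Setting $G:=\dfrac{F(x)}{x}\mathcal{C}=\dfrac{\mathcal{C}}{1-x-x^{2}}$ and using $F(x)=x/(1-x-x^{2})$, a short computation gives $x^{2}G^{2}=\mathcal{C}-1$ and $(1-x-x^{2})G=\mathcal{C}$, so $x^{2}G^{2}-(1-x-x^{2})G+1=(\mathcal{C}-1)-\mathcal{C}+1=0$. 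As $G$ has constant term $C(0)=1$, the branch argument above forces $G=B(x)$, which is \eqref{idb}.

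I expect the only genuine subtlety to be the branch/uniqueness step (the quadratic has two formal solutions, exactly one of which is a power series), which recurs twice; the rest is routine algebra. As a more conceptual alternative for \eqref{idb} I would record the underlying bijection: deleting the level steps of a $2$-generalized Motzkin path leaves a Dyck path, and a Dyck path with $2k$ steps has $2k+1$ slots into which runs of $H$ and $H_{2}$ steps are inserted; since a run of total length $\ell$ can be formed in $F_{\ell+1}$ ways, with generating function $1/(1-x-x^{2})$, summing over $k$ gives $\sum_{k\ge 0}c_{k}x^{2k}(1-x-x^{2})^{-(2k+1)}=(1-x-x^{2})^{-1}C\!\left(x^{2}/(1-x-x^{2})^{2}\right)=\tfrac{F(x)}{x}C(F(x)^{2})$. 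This decomposition is the one that reappears later when relating $r_{i,j}$ to path counts, so stating it explicitly is worthwhile. $\square$
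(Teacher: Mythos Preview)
Your derivation of the functional equation and of \eqref{ida} matches the paper's proof exactly, including the first-return decomposition and the quadratic. For \eqref{idb} you diverge slightly: the paper simply rewrites the closed form, noting that the discriminant is $(1-x-x^{2})^{2}-4x^{2}$ and then factoring out $1-x-x^{2}$ to exhibit $\frac{F(x)}{x}C(F(x)^{2})$ directly; you instead plug $G=\frac{F(x)}{x}C(F(x)^{2})$ back into the quadratic using the Catalan functional equation $C(y)=1+yC(y)^{2}$, and then invoke uniqueness of the power-series branch. Both arguments are correct and of comparable length; the paper's is a one-line algebraic identity, while yours has the advantage of isolating exactly why the Catalan and Fibonacci generating functions appear. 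Your additional bijective remark (stripping horizontal steps to a Dyck path and filling the $2k+1$ slots with $H/H_{2}$ compositions counted by Fibonacci numbers) is not in the paper's proof of the lemma but is precisely the mechanism behind the convolved-Fibonacci expansion that shows up later in Corollary~2.5, so it is a welcome addition.
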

\begin{proof}
From the first return decomposition any nonempty  2-generalized Motzkin path $T$  may be decomposed as either $UT'DT'', \  HT',$  or $H_2T'$, where $T', T''$ are 2-generalized Motzkin paths (possible empty).
Making use of the Flajolet's symbolic method (cf. \cite{Flaj}) we obtain
\begin{align*}
B(x)=1+(x+x^2)B(x)+x^2B(x)^2.
\end{align*}
Therefore Equation (\ref{ida}) follows.  Moreover,
\begin{align*}
B(x)&=\frac{1-x-x^2-\sqrt{(1-x-x^2)^2-4x^2}}{2x^2}=\frac{1-\sqrt{1-4\left(\frac{x}{1-x-x^2}\right)^2}}{\frac{2x^2}{1-x-x^2}}\\
&=\frac{1}{1-x-x^2} \frac{1-\sqrt{1-4F(x)^2}}{2F(x)^2}=\frac{F(x)}{x}C(F(x)^2).
\end{align*}
\end{proof}

The height of a 2-generalized grand Motzkin path is defined as the final height of the path, i.e., the stopping $y$-coordinate. The number of 2-generalized grand Motzkin paths of length $n$ and height $j$ is denoted by $g_{n,j}^{(2)}$.

\begin{theorem}
The generating function of the  2-generalized grand Motzkin paths of  height $j$ is
\begin{align*}
M^{(j)}(x):=\sum_{i=0}^{\infty}g_{i,j}^{(2)}x^i=\frac{F(x)^{j+1}C(F(x)^2)^j}{x(1-2F(x)^2C(F(x)^2))},
\end{align*}
where $F(x)$ and $C(x)$ are the generating function of the Fibonacci numbers and Catalan numbers. Moreover,
\begin{align*}
g_{i,j}^{(2)}=\sum_{m=0}^{i}\sum_{l=0}^{i-j-2m}\binom{2m+j}{m}\binom{l+j+2m}{l}\binom{l}{i-j-2m-l},\quad (0\leq j\leq i).
\end{align*}
\end{theorem}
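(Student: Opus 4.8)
The plan is to compute $M^{(j)}(x)$ from a decomposition of grand paths according to their last visit to the $x$-axis, to reduce every generating function that appears to the function $B(x)$ of Lemma~\ref{lem2}, and then to read off $g^{(2)}_{i,j}$ using two classical binomial expansions. I would begin with $j=0$: let $G(x):=M^{(0)}(x)$ enumerate the 2-generalized \emph{grand excursions}, i.e., the grand paths of final height $0$. Decomposing a nonempty grand excursion at its first return to the axis, the initial factor is one of $H$, $H_2$, $UT'D$, or $DT''U$, where $T'$ stays strictly above the axis and $T''$ strictly below it; reflecting $T''$ in the axis identifies it with an ordinary 2-generalized Motzkin path, so both $T'$ and $T''$ are counted by $B(x)$. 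This gives $G(x)=1+(x+x^{2})G(x)+2x^{2}B(x)G(x)$, hence $G(x)=\bigl(1-x-x^{2}-2x^{2}B(x)\bigr)^{-1}$; and since the relation $B=1+(x+x^{2})B+x^{2}B^{2}$ of Lemma~\ref{lem2} gives $1-x-x^{2}-x^{2}B=1/B$, I obtain $G(x)=B(x)/\bigl(1-x^{2}B(x)^{2}\bigr)$. Using instead (\ref{ida}) together with $1-2x-5x^{2}+2x^{3}+x^{4}=(1-x-x^{2})^{2}\bigl(1-4F(x)^{2}\bigr)$, the same function is $G(x)=F(x)/\bigl(x\sqrt{1-4F(x)^{2}}\bigr)$.

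Next I would pass to arbitrary $j\ge1$. A 2-generalized grand Motzkin path of height $j$ has a last point on the $x$-axis; the portion up to that point is a grand excursion (factor $G(x)$), the step right after it is forced to be $U$ — it cannot be $H$ or $H_2$, which keep the path at level $0$, nor $D$, after which the path would stay below the axis and never reach the positive height $j$ — and the remainder runs from level $1$ to level $j$ weakly above $1$; by the first-passage decomposition through the levels $1<2<\cdots<j$ that last piece is counted by $(xB(x))^{j-1}B(x)$. Hence, uniformly for $j\ge0$,
\[
M^{(j)}(x)=G(x)\,x^{j}B(x)^{j}=\frac{x^{j}B(x)^{j+1}}{1-x^{2}B(x)^{2}}.
\]
Now I would substitute (\ref{idb}), which gives $xB(x)=F(x)C(F(x)^{2})$, and insert the second expression for $G(x)$ above; this turns the identity into
\[
M^{(j)}(x)=\frac{F(x)^{j+1}C(F(x)^{2})^{j}}{x\sqrt{1-4F(x)^{2}}}=\frac{F(x)^{j+1}C(F(x)^{2})^{j}}{x\bigl(1-2F(x)^{2}C(F(x)^{2})\bigr)},
\]
the last step being the identity $\sqrt{1-4y}=1-2yC(y)$ with $y=F(x)^{2}$, immediate from the definition of $C$. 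This is the claimed generating function.

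For the explicit double sum I would use the classical expansions $C(y)^{j}/\sqrt{1-4y}=\sum_{m\ge0}\binom{2m+j}{m}y^{m}$ and, obtained from $(1-x(1+x))^{-r}$, $(1-x-x^{2})^{-r}=\sum_{n\ge0}\bigl(\sum_{l}\binom{l+r-1}{l}\binom{l}{n-l}\bigr)x^{n}$. The first turns $M^{(j)}(x)$ into $\tfrac1x\sum_{m\ge0}\binom{2m+j}{m}F(x)^{2m+j+1}$; since $F(x)^{2m+j+1}=x^{2m+j+1}(1-x-x^{2})^{-(2m+j+1)}$, extracting $[x^{i}]$ and applying the second expansion with $r=2m+j+1$ and $n=i-j-2m$ gives
\[
g^{(2)}_{i,j}=\sum_{m\ge0}\binom{2m+j}{m}[x^{i-j-2m}](1-x-x^{2})^{-(2m+j+1)}=\sum_{m=0}^{i}\sum_{l=0}^{i-j-2m}\binom{2m+j}{m}\binom{l+j+2m}{l}\binom{l}{i-j-2m-l},
\]
the stated finite ranges being harmless because all further terms vanish; heights $j<0$ are then covered by the symmetry $g^{(2)}_{i,j}=g^{(2)}_{i,-j}$.

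The algebra above is routine once the functional equations for $B$, $C$, and $F$ are in hand, so I expect the main obstacle to be making the three combinatorial decompositions rigorous: checking that the step after the last axis-visit is genuinely forced to be $U$; that the sub-paths strictly above and strictly below the axis are independent factors, each governed by $B(x)$, which is exactly what legitimizes the reflection argument; and that the first-passage factorization through intermediate levels is a genuine bijection. Getting the path lengths and the empty-path conventions exactly right in these decompositions is where the care must go.
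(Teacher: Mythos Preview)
Your proof is correct and follows essentially the same route as the paper: both arrive at $M^{(j)}(x)=x^{j}B(x)^{j}\big/(1-x-x^{2}-2x^{2}B(x))$ via a return-to-the-axis decomposition, then substitute $xB(x)=F(x)C(F(x)^{2})$ and extract coefficients using the identity $C(y)^{j}/\sqrt{1-4y}=\sum_{m\ge 0}\binom{2m+j}{m}y^{m}$ together with the binomial expansion of $(1-x-x^{2})^{-r}$. The only difference is organizational---the paper decomposes a height-$j$ path directly by its \emph{first} return to $0$ (the non-returning tail contributing the factor $x^{j}B(x)^{j}=(xB)^{j}$), whereas you first isolate the grand-excursion prefix $G(x)=M^{(0)}(x)$ via the \emph{last} visit to $0$; these are the same factorization read from opposite ends, and your decomposition of the tail that stays weakly above level $1$ is really a \emph{last}-visit (not first-passage) decomposition at each level, though the resulting factor $(xB)^{j-1}B$ is of course correct.
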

\begin{proof}
Consider any 2-generalized grand Motzkin path $P$. Then any nonempty path $P$ may be decomposed as either
$$UMDP', \quad DMUP', \quad HP', \quad H_2P', \quad \text{or} \quad UM_1UM_2\cdots UM_j,$$
where $M, M_1, \dots, M_j$ are 2-generalized Motzkin paths (possible empty), $P'$ is a 2-generalized grand Motzkin path (possible empty).

Schematically,
\begin{figure}[H]
\centering
\includegraphics[scale=0.7]{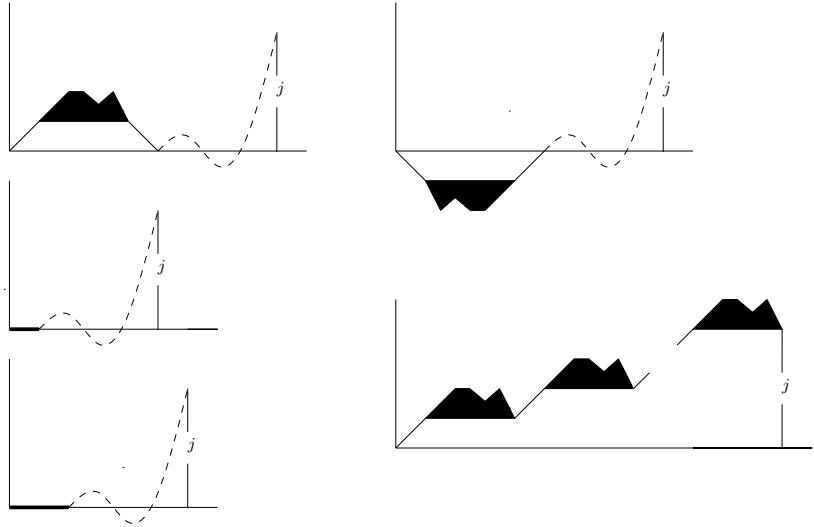}
\caption{Factorizations of any 2-generalized grand Motzkin path.} \label{gmot}
\end{figure}

From the  Flajolet's symbolic method  we obtain

$$M^{(j)}(x)=2x^2B(x)M^{(j)}(x) + (x+x^2)M^{(j)}(x)+x^j(B(x))^j, \quad j\geq 0.$$
Therefore
$$M^{(j)}(x)=\frac{x^jB(x)^j}{1-x-x^2-2x^2B(x)}.$$
 From Lemma \ref{lem2} we get
\begin{align*}
M^{(j)}(x)=\frac{x^j \left(\frac{F(x)}{x}C(F(x)^2)\right)^j}{1-x-x^2-2x^2\frac{F(x)}{x}C(F(x)^2)}=\frac{F(x)^{j+1}C(F(x)^2)^j}{x(1-2F(x)^2C(F(x)^2))}.
\end{align*}
On the other hand,  from the following identity (see Ec. 2.5.15 of \cite{Wilf})
$$\frac{1}{\sqrt{1-4x}}\left(\frac{1-\sqrt{1-4x}}{x}\right)^k=\sum_{m=0}^\infty\binom{2m+k}{m}x^m$$
we obtain
$$\frac{C(x^2)^j}{1-2x^2C(x^2)}=\sum_{m=0}^\infty\binom{2m+j}{m}x^{2m}$$
Therefore
\begin{align*}
M^{(j)}(x)&=\frac{F(x)^{j+1}(x)}{x}\sum_{m=0}^{\infty}\binom{2m+j}{m}F(x)^{2m}=\frac{1}{1-x-x^2}\sum_{m=0}^{\infty}\binom{2m+j}{m}F(x)^{2m+j}\\
&=\sum_{m=0}^{\infty}\binom{2m+j}{m}\frac{x^{2m+j}}{(1-x-x^2)^{2m+j+1}}=\sum_{m=0}^{\infty}\sum_{l=0}^{\infty}\binom{2m+j}{m}\binom{l+j+2m}{l}(1+x)^lx^{2m+j+l}\\
&=\sum_{m=0}^{\infty}\sum_{l=0}^{\infty}\sum_{s=0}^{l}\binom{2m+j}{m}\binom{l+j+2m}{l}\binom{l}{s}x^{2m+j+l+s},
\end{align*}
Put $t=2m+j+l+s$
\begin{align*}
M^{(j)}(x)&=\sum_{m=0}^{\infty}\sum_{l=0}^{\infty}\sum_{t=2m+j+l}^{2m+j+2l}\binom{2m+j}{m}\binom{l+j+2m}{l}\binom{l}{t-2m-j-l}x^t.
\end{align*}
The result follows by comparing the coefficients.
\end{proof}

\begin{theorem}\label{rel}
The number of 2-generalized grand Motzkin paths of length $n$ and height $j$ is equal to the entry $(n,j)$ in the Pascal rhombus, i.e.,
$$r_{n,j}=g_{n,j}^{(2)}.$$
\end{theorem}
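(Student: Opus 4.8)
The plan is to prove that $g_{n,j}^{(2)}$ satisfies the defining recurrence \eqref{recrho} of the Pascal rhombus together with the identical initial conditions; the equality $r_{n,j}=g_{n,j}^{(2)}$ then drops out of a one-line induction on $n$. No generating functions are needed, though one could equivalently check that $\sum_{n}r_{n,j}x^{n}$ satisfies the same functional equation that produced $M^{(j)}(x)$ in the previous theorem.

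First I would pin down the two initial rows. For $n=0$ the only $2$-generalized grand Motzkin path is the empty one, which has height $0$; thus $g_{0,0}^{(2)}=1$ and $g_{0,j}^{(2)}=0$ for $j\neq 0$. For $n=1$ the step $H_2=(2,0)$ is too long to appear, so the paths of length $1$ are exactly the single steps $U$, $H$, $D$, ending at heights $1$, $0$, $-1$; hence $g_{1,-1}^{(2)}=g_{1,0}^{(2)}=g_{1,1}^{(2)}=1$ and $g_{1,j}^{(2)}=0$ otherwise. These match the initial conditions for $\Rh$ exactly.

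Next I would set up the recurrence for $n\ge 2$ by a last-step decomposition. A nonempty path $P$ of length $n$ and final height $j$ ends in exactly one of the steps $U$, $D$, $H$, $H_2$; deleting that last step is a bijection onto, respectively, the grand paths of length $n-1$ and height $j-1$, the grand paths of length $n-1$ and height $j+1$, the grand paths of length $n-1$ and height $j$, and the grand paths of length $n-2$ and height $j$. What makes each of these a clean bijection is that grand Motzkin paths obey no positivity constraint, so the truncated path is again an admissible grand path and nothing is lost at a boundary. Adding up the four disjoint, exhaustive cases gives
\[
g_{n,j}^{(2)}=g_{n-1,j-1}^{(2)}+g_{n-1,j}^{(2)}+g_{n-1,j+1}^{(2)}+g_{n-2,j}^{(2)},\qquad n\ge 2,\ j\in\Z,
\]
which is precisely \eqref{recrho}, valid for every $j\in\Z$ (consistently, both sides are symmetric under $j\mapsto -j$: for $\Rh$ by induction from \eqref{recrho}, and for the paths by the $U\leftrightarrow D$ reflection across the $x$-axis).

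Since the recurrences and the data on rows $n=0,1$ agree, induction on $n$ gives $r_{n,j}=g_{n,j}^{(2)}$ for all $n\ge 0$ and all $j\in\Z$; together with the previous theorem this yields the closed form for $r_{n,j}$ announced in the introduction (using the $j\mapsto -j$ symmetry to cover $j<0$). I do not anticipate a real obstacle: the only places demanding attention are the correct tallying of the first two rows — in particular the observation that $H_2$ cannot occur when $n=1$, which is exactly why \eqref{recrho} starts at $i\ge 2$ — and the verification that the last-step decomposition is exhaustive and bijective, which again hinges on grand paths having no floor.
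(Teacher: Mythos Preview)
Your argument is correct and is exactly the approach the paper takes: it too verifies that $g_{n,j}^{(2)}$ satisfies recurrence~\eqref{recrho} and the same initial values by looking at the step preceding the endpoint of a 2-generalized grand Motzkin path. Your version simply spells out the initial-row check and the four-case last-step bijection that the paper leaves implicit.
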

\begin{proof}
The  sequence $g_{n,j}^{(2)}$ satisfies the recurrence (\ref{recrho}) and the same initial values. It is clear,
by considering the positions preceding to the last step of any  2-generalized grand Motzkin path.
\end{proof}

\begin{corollary}
The generating function of the $j$th column of the Pascal rhombus is
\begin{align*}
L_j(x)=\frac{F(x)^{j+1}C(F(x)^2)^j}{x(1-2F(x)^2C(F(x)^2))},
\end{align*}
where $F(x)$ and $C(x)$ are the generating function of the Fibonacci numbers and Catalan numbers. Moreover,
\begin{align*}
r_{i,j}=\sum_{m=0}^{i}\sum_{l=0}^{i-j-2m}\binom{2m+j}{m}\binom{l+j+2m}{l}\binom{l}{i-j-2m-l} \quad (0\leq j\leq i).
\end{align*}
\end{corollary}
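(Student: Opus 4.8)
The plan is to read off the Corollary as a direct consequence of Theorem~\ref{rel} together with the two facts about $g_{i,j}^{(2)}$ --- its generating function and its closed form --- already proved in the preceding theorem. First I would fix an integer $j\geq 0$ and set $L_j(x):=\sum_{i\geq 0}r_{i,j}x^i$, observing that this is a well-defined formal power series; indeed $r_{i,j}=0$ for $i<j$, which matches $g_{i,j}^{(2)}=0$ for $i<j$ since a path of length $i$ cannot terminate at height $j$ when $i<j$.

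Next I would invoke Theorem~\ref{rel}: for every $i\geq 0$ we have $r_{i,j}=g_{i,j}^{(2)}$. Summing against $x^i$ gives
$$L_j(x)=\sum_{i\geq 0}g_{i,j}^{(2)}x^i=M^{(j)}(x)=\frac{F(x)^{j+1}C(F(x)^2)^j}{x\bigl(1-2F(x)^2C(F(x)^2)\bigr)},$$
the last equality being the evaluation of $M^{(j)}(x)$ obtained in the preceding theorem; this is the first assertion. The closed form for $r_{i,j}$ is then simply the restatement, via $r_{i,j}=g_{i,j}^{(2)}$, of the explicit double sum for $g_{i,j}^{(2)}$ proved there, with the range $0\leq j\leq i$ carried over verbatim and the reflected columns recovered from the symmetry $r_{i,j}=r_{i,-j}$ of the rhombus.

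I expect no real obstacle here beyond bookkeeping; the only points deserving a word are (i) that $\Rh$ is a doubly infinite array symmetric under $j\mapsto -j$, so restricting to $j\geq 0$ loses no information, and (ii) that the identification $r_{i,j}=g_{i,j}^{(2)}$ is being applied in the degenerate rows $i=0,1$ as well, where it reduces to checking the prescribed initial conditions against the empty path and the single-step paths $U$, $H$, $D$. Should one prefer an argument that does not cite Theorem~\ref{rel}, one would instead condition a 2-generalized grand Motzkin path of length $i$ and final height $j$ on its last step --- deleting an $H$ step leaves a path counted by $g_{i-1,j}^{(2)}$, an $H_2$ step one counted by $g_{i-2,j}^{(2)}$, a $U$ step one counted by $g_{i-1,j-1}^{(2)}$, and a $D$ step one counted by $g_{i-1,j+1}^{(2)}$, which is precisely recurrence~(\ref{recrho}) --- but with Theorem~\ref{rel} available this is unnecessary.
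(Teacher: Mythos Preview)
Your proposal is correct and matches the paper's approach exactly: the paper gives no separate proof for this corollary, treating it as the immediate consequence of Theorem~\ref{rel} together with the preceding theorem on $M^{(j)}(x)$ and the closed form for $g_{i,j}^{(2)}$. Your additional remarks on symmetry and on the last-step decomposition are accurate but go beyond what the paper records.
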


The convolved Fibonacci numbers $F_j^{(r)}$ are defined by $$(1-x-x^2)^{-r}=\sum_{j=0}^{\infty}F_{j+1}^{(r)}x^j,  \ \ \ r\in \mathbb{Z}^+.$$  If $r=1$ we have the classical Fibonacci sequence.

Note that
\begin{align*}
F_{m+1}^{(r)}=\sum_{j_1+j_2+\cdots +j_r=m}F_{j_1+1}F_{j_2+1} \cdots F_{j_r+1}.
\end{align*}
Moreover, using a result of Gould \cite[p. 699]{GOU} on Humbert polynomials (with $n = j, m = 2,
x = 1/2, y = -1, p = -r$ and $C = 1$), we have
\begin{align*}
F_{j+1}^{(r)}=\sum_{l=0}^{\lfloor j/2 \rfloor}\binom{j+r-l-1}{j-l}\binom{j-l}{l}.
\end{align*}

\begin{corollary}The following equality holds
\begin{align}
r_{i,j}=\sum_{m=0}^{\lfloor\frac{i-j}{2}\rfloor}\binom{2m+j}{m}F_{i-j-2m+1}^{(j+2m+1)},\label{rombo3}
\end{align}
where $F_{l}^{(r)}$ are the convolved Fibonacci numbers.
\end{corollary}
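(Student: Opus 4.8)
The plan is to avoid manipulating binomial coefficients at all and instead read off the coefficients of the column generating function $L_j(x)$ from the intermediate form it already took during the proof of the Theorem. Recall that while computing $M^{(j)}(x)$ we passed through the identity
$$L_j(x)=M^{(j)}(x)=\sum_{m=0}^{\infty}\binom{2m+j}{m}\frac{x^{2m+j}}{(1-x-x^2)^{2m+j+1}}.$$
This is the only input needed: the corollary will follow just by expanding each summand in powers of $x$.

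Concretely, the first step is to apply the definition of the convolved Fibonacci numbers with parameter $r=2m+j+1$, i.e. $(1-x-x^2)^{-(2m+j+1)}=\sum_{k=0}^{\infty}F_{k+1}^{(2m+j+1)}x^{k}$, to the $m$-th term. This turns the displayed expression into the double series
$$L_j(x)=\sum_{m=0}^{\infty}\sum_{k=0}^{\infty}\binom{2m+j}{m}F_{k+1}^{(2m+j+1)}x^{2m+j+k}.$$
The second step is to extract $[x^i]$: a monomial $x^{2m+j+k}$ hits $x^i$ exactly when $k=i-j-2m$, and this is a nonnegative integer precisely when $0\le m\le\lfloor(i-j)/2\rfloor$, which truncates the $m$-sum. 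Since $r_{i,j}=g_{i,j}^{(2)}=[x^i]L_j(x)$ by Theorem \ref{rel} and the preceding corollary, collecting these terms yields
$$r_{i,j}=\sum_{m=0}^{\lfloor(i-j)/2\rfloor}\binom{2m+j}{m}F_{i-j-2m+1}^{(2m+j+1)},$$
which is (\ref{rombo3}).

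There is essentially no obstacle along this route; the only care needed is the index bookkeeping, in particular recognizing that the constraint $i-j-2m\ge 0$ is exactly what produces the upper limit $\lfloor(i-j)/2\rfloor$. For completeness I would also note the alternative, more computational derivation: starting instead from the double-sum formula $r_{i,j}=\sum_{m}\sum_{l}\binom{2m+j}{m}\binom{l+j+2m}{l}\binom{l}{i-j-2m-l}$ of the previous corollary, one can collapse the inner sum over $l$ to $F_{i-j-2m+1}^{(2m+j+1)}$ by matching it, after reindexing $l\mapsto i-j-2m-l$, against Gould's evaluation of $F_{j+1}^{(r)}$ in terms of Humbert polynomials recalled just above (with $n=i-j-2m$ and $r=2m+j+1$, so that $n+r-1=i$). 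The generating-function argument is shorter and avoids re-proving that binomial identity, so that is the one I would present.
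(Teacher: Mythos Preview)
Your argument is correct and is essentially identical to the paper's own proof: both start from the intermediate expression $L_j(x)=\sum_{m\ge 0}\binom{2m+j}{m}\,x^{2m+j}(1-x-x^2)^{-(2m+j+1)}$, expand each factor $(1-x-x^2)^{-(2m+j+1)}$ via the definition of the convolved Fibonacci numbers, and then read off the coefficient of $x^i$. Your write-up is in fact slightly more explicit than the paper's about why the outer sum truncates at $m=\lfloor(i-j)/2\rfloor$, and your side remark about the alternative route through Gould's formula is a nice bonus but not needed.
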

\begin{proof}
\begin{align*}
L_n(x)&=\sum_{m=0}^{\infty}\binom{2m+n}{m}\frac{x^{2m+n}}{(1-x-x^2)^{n+2m+1}}=\sum_{m=0}^{\infty}\sum_{j=0}^{\infty} \binom{2m+n}{m}F_{j+1}^{(n+2m+1)}x^{2m+n+j},
\end{align*}
Put $t=2m+n+j$
\begin{align*}
L_n(x)&=\sum_{m=0}^{\infty}\sum_{t=2m+n}^{\infty} \binom{2m+n}{m}F_{t-2m-n+1}^{(n+2m+1)}x^{t}.
\end{align*}
The result follows by comparing the coefficients.
\end{proof}

\begin{example}
The generating function of the central column of the Pascal rhombus   (sequence A059345) is
$$L_0(x)=\frac{1}{\sqrt{1 - 2 x - 5 x^2 + 2 x^3 + x^4}}=1 + x + 4 x^2 + 9 x^3 + 29 x^4 + 82 x^5 + 255 x^6  +\cdots.$$

The generating function of the first few columns ($j=1, 2, 3$) of the Pascal rhombus  are:
\begin{align*}
L_1(x)&=x + 2 x^2 + 8 x^3 + 22 x^4 + 72 x^5 + 218 x^6 + 691 x^7 + 2158 x^8  +\cdots, \quad \text{(A106053)}\\
L_2(x)&=x^2 + 3 x^3 + 13 x^4 + 42 x^5 + 146 x^6 + 476 x^7 + 1574 x^8 + \cdots, \quad \text{(A106050)}\\
 L_3(x)&=x^3 + 4 x^4 + 19 x^5 + 70 x^6 + 261 x^7 + 914 x^8 + 3177 x^9  + \cdots
 \end{align*}
\end{example}

\textbf{Remark:} The results of this article were discovered by using the Counting Automata Methodology, \cite{Ram}.

\section{Acknowledgements}

The author thanks the anonymous referee for his/her comments and remarks, which helped  to improve the article.


\medskip

\noindent MSC2010: 05A19, 11B39, 11B37.

\end{document}